\documentclass{amsart}
\usepackage{graphics}
{\theoremstyle{plain}%
\newtheorem{theorem}{Theorem}[section]
\newtheorem{proposition}[theorem]{Proposition}
\newtheorem{corollary}[theorem]{Corollary}
\newtheorem{lemma}[theorem]{Lemma}
\newtheorem{Conjecture}{Conjecture}
{\theoremstyle{definition}
\newtheorem{remark}[theorem]{Remark}
}
{\theoremstyle{definition}%
\newtheorem*{fremark}{Final Remark}
}

\DeclareMathOperator{\spann}{span}
\DeclareMathOperator{\sign}{sign}

\newcommand{\ccc}{\mathbf c}

\def\1{\text{{\textsc{1}}\!\!\!{{\textsc{1}}}}}


\begin{document}

\title[On the $n-$th linear polarization constant of $\mathbb R^n.$]{On the $n-$th linear polarization constant\\ of $\mathbb R^n.$}

\author[D. Pinasco]{Dami\'an Pinasco}
\address{Departamento de Matem\'{a}ticas y Estad\'istica\\
Universidad Torcuato Di Tella\\
Av. Figueroa Alcorta 7350 (C1428BCJ)\\
Buenos Aires, Argentina\\
and CONICET}
\email{dpinasco@utdt.edu}




\renewcommand{\thefootnote}{}

\footnote{2020 \emph{Mathematics Subject Classification}: Primary 46G25; Secondary 26D05, 52A40.}

\footnote{\emph{Key words and phrases}: Linear Polarization Constants, Product of Linear Functionals.}


\renewcommand{\thefootnote}{\arabic{footnote}}
\setcounter{footnote}{0}


\maketitle


\begin{center}
{\it To my daughters, Ana and Lara.}
\end{center}

\begin{abstract} We prove that given any set of $n$  unit vectors $\{v_i\}_{i=1}^{n}\subset \mathbb R^n,$ the inequality
\[
\sup\limits_{\Vert x \Vert_{\mathbb R^n} =1} \vert \langle x, v_1 \rangle \cdots \langle x, v_n\rangle\vert \ge n^{-n/2}
\]
holds for $n \le 14.$ Moreover, the equality is attained if and only if  $\{v_i\}_{i=1}^{n}$ is an orthonormal system.
\end{abstract}

\section{Introduction.}

Let $P_1, \ldots, P_n$ be homogeneous polynomials defined on $\mathbb R^m, \mathbb C^m$ or, in general, in any Banach space. Given a norm $\Vert \, \cdot \, \Vert$ defined on the space of polynomials, the problem of finding a constant $M$, depending only on the degrees of $P_1, \ldots, P_n$, such that
\begin{equation*}
\Vert P_1 \Vert \cdots \Vert P_n \Vert \le M \Vert P_1 \cdots P_n\Vert
\end{equation*}
has been extensively studied by many authors. In this note we are concerned with a special case: given any set $\{\phi_i\}_{i=1}^n $ of continuous linear functionals defined on a Hilbert space $H,$ we study the inequality
\begin{equation}\label{problema}
\Vert \phi_1\Vert \cdots \Vert \phi_n\Vert \le M\, \Vert \phi_1 \cdots \phi_n\Vert,
\end{equation}
where $\phi_1 \cdots \phi_n$ is the $n-$homogeneous polynomial defined by the pointwise product
\[
\phi_1 \cdots \phi_n(x)=\phi_1(x) \cdots \phi_n(x),
\]
and $\Vert \cdot \Vert$ is the uniform norm over the unit sphere of $H.$

For a Banach space $E$ with dual space $E'$ and considering the uniform norm on the unit sphere of $E$, C. Ben\'{\i}tez, Y. Sarantopoulos and A. Tonge (see \cite{BST}) defined the $n-$th linear polarization constant of $E$
\begin{align*}
\ccc_n(E)=&\inf\left\{M>0: \Vert \phi_1\Vert \cdots \Vert \phi_n\Vert \le M\, \Vert \phi_1 \cdots \phi_n\Vert, \forall \,  \phi_1, \ldots \phi_n \in E'\right\} \\
=& 1/\inf\left\{\sup\limits_{\Vert x \Vert=1} \vert \phi_1(x) \cdots \phi_n(x)\vert:  \phi_i \in E', \Vert \phi_i\Vert=1\  \forall\ 1 \le i \le n\right\}.
\end{align*}
In~\cite{RT}, R. Ryan and B. Turett, studying the geometry of spaces of polynomials, showed that for each $n$ there is a constant $K_n$ such that $\ccc_n(E) \leq K_n$ for every Banach space $E$. In \cite{BST}, it was proved that the best constant $K_n$ for complex Banach spaces is $n^n$ and S. G. R\'ev\'esz and Y. Sarantopoulos \cite{RS} proved that the best constant $K_n$  for real Banach spaces is also $n^n.$ Note that $\ccc_n(\ell_1^n)=n^n,$ but in general, for different Banach spaces it is possible to find smaller values for $\ccc_n(E).$ In~the last two decades there has been many research articles on this topic, for different techniques and approaches in calculating polarization constants see, for example, \cite{AR},  \cite{F}, \cite{G-VV}, \cite{LeLiRa}, \cite{LMS}, \cite{Ma}, \cite{Ma1}, \cite{MaMu}, \cite{MSS}, \cite{PPT}, \cite{PR} and \cite{RS} and the references therein.

Let $E$ be a Banach space, a {\it plank} or {\it strip} is the set of points between two parallel hyperplanes in $E$. Given a convex body $K \subset E,$ and any norm one linear functional $\phi \in E',$ we can measure the distance between two supporting hyperplanes of $K,$ defined by level sets of $\phi.$ This distance is the width of $K$ in the direction induced by $\phi.$ The {\it minimal width of} $K$ is the minimal width among all directions induced by norm one linear functionals. The following question was posed by A. Tarski (see \cite{Tar2}):

{\it Let $K$ be a convex body covered by $n$ parallel planks, is it true that the sum of the widths of each plank is not less than the minimal width of $K$?}

A positive answer was given by T. Bang in \cite{Ban}, who presented a strengthened version of this question considering the sum of the {\it relative widths} instead of the widths of the planks. This is still an open problem in the general case, but for centrally symmetric bodies a positive answer was given by K. Ball in \cite{Ba1}. It is worth noting that linear polarization constants are related to {\it plank problems} in Banach spaces, in particular the upper bound $\ccc_n(E) \le n^n$ for any real Banach space can be deduced from Theorem 2 in \cite{Ba1}.

In \cite{RS}, using the remarkable theorem of A. Dvoretzky (see \cite{Dv}, \cite{Mi}), it is shown that Hilbert spaces have the smallest $n-$th polarization constant among infinite dimensional Banach spaces. Namely, we have $\ccc_n(\ell_2^n) \le \ccc_n(E)$ for any infinite dimensional Banach space $E.$ So, knowing the exact value of $\ccc_n(\ell_2^n)$ becomes an interesting and intriguing problem. Working in a Hilbert space $H$, by Riesz representation theorem, inequality (\ref{problema}) may be written in the following way
\begin{equation*}\label{hilbert}
\Vert v_1\Vert \cdots \Vert v_n\Vert \le M \sup\limits_{\Vert x \Vert_{H} =1}\, \vert\langle x,v_1\rangle \cdots \langle x,v_n\rangle\vert.
\end{equation*}
Note that we can modify any vector $v_i$ to $-\, v_i$ at our convenience, without altering either sides of the inequality.

Given an orthonormal basis $\{e_i\}_{i=1}^n \subset \mathbb R^n,$ by the Arithmetic-Geometric mean inequality, for any unit vector $x \in \mathbb R^n$  we have
\[
\prod_{i=1}^n \vert \langle x, e_i \rangle \vert = \left(\prod_{i=1}^n \vert \langle x, e_i \rangle \vert^2\right)^{1/2} \le \left(\frac{1}{n}\sum_{i=1}^n \vert \langle x, e_i \rangle \vert^2\right)^{n/2}=n^{-n/2}.
\]
From this bound, it follows that $\ccc_n(\mathbb R^n)\ge \sqrt{n^n}.$
In \cite{BST}, C. Ben\'itez, Y. Sarantopoulos and A. Tonge asked if $\ccc_n(\mathbb R^n)=n^{n/2},$ making the following conjecture.

\begin{Conjecture}\label{pol14} Given $n$ unit vectors $\{v_i\}_{i=1}^n \subset \mathbb R^n$, then
\begin{equation}\label{BST}
\sup\limits_{\Vert x \Vert_{\mathbb R^n} =1} \vert \langle x, v_1 \rangle \cdots \langle x, v_n\rangle\vert \ge n^{-n/2},
\end{equation}
and equality holds if and only if $\{v_i\}_{i=1}^n$ is an orthonormal system.
\end{Conjecture}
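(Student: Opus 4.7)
My plan is to reduce the inequality to a concrete statement about the Gram matrix $G=(\langle v_i,v_j\rangle)_{i,j=1}^n$ and then attack it with an explicit family of test vectors. Since $\sup_{\|x\|=1}|\prod_i\langle x,v_i\rangle|$ is invariant under orthogonal changes of coordinates on $\mathbb R^n$, it depends on $\{v_i\}$ only through $G$, a positive semidefinite matrix with unit diagonal. I may assume $G$ is positive definite; otherwise the polynomial $p(x):=\prod_i\langle x,v_i\rangle$ vanishes on a non-trivial subspace and the problem reduces to a smaller instance. Let $\{w_i\}_{i=1}^n$ be the dual basis defined by $\langle w_i,v_j\rangle=\delta_{ij}$, whose Gram matrix is $G^{-1}$. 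For any non-zero $t\in\mathbb R^n$, the unit vector $x_t:=(\sum_it_iw_i)/\|\sum_it_iw_i\|$ satisfies $\langle x_t,v_j\rangle=t_j/\sqrt{t^TG^{-1}t}$, and therefore
$$
\sup_{\|x\|=1}|\langle x,v_1\rangle\cdots\langle x,v_n\rangle|\;\ge\;\frac{|t_1\cdots t_n|}{(t^TG^{-1}t)^{n/2}}.
$$

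The right-hand side is invariant under $t\mapsto\alpha t$, so after normalising $|\prod_it_i|=1$ the conjecture is implied by the sufficient condition $\Phi(G):=\min\{\,t^TG^{-1}t:|t_1\cdots t_n|=1\,\}\le n$, with strict inequality whenever $G\ne I$. For $G=I$ this is exactly AM--GM on $t_i^2$ under the constraint $\prod t_i^2=1$, attained at $t=\pm\mathbf 1$, giving $\Phi(I)=n$. In general the critical-point system from Lagrange multipliers reads $t_i(G^{-1}t)_i=\Phi(G)/n$ for all $i$ (the common value is obtained by contracting with $t$); for instance when $n=2$ with $G=\bigl(\begin{smallmatrix}1&c\\c&1\end{smallmatrix}\bigr)$ it yields $\Phi(G)=2/(1+|c|)$, and for $G=(1-c)I+cJ$ in general dimension an explicit one-parameter reduction of the Lagrange system gives $\Phi(G)<n$ whenever $c\ne 0$.

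The main obstacle is to extend this verification to every admissible $G$. The inequality $\Phi(G)\le n$ is a semi-algebraic statement on the cone of PSD unit-diagonal matrices and does not appear to follow from any one-line classical inequality: in particular the coarser restriction $t\in\{\pm 1\}^n$ is easily seen to be insufficient already in $n=3$ (for three unit vectors with strongly negative pairwise inner products), so one must use the full real-valued $t$ and genuinely solve the nonlinear Lagrange system. For $n\le 14$ the plan is to (i) reduce the parameter space by orthogonal conjugation and symmetrisation, (ii) handle the resulting symmetric one- or two-parameter families through explicit closed-form solution of the low-degree Lagrange equations, and (iii) control the remaining non-symmetric configurations by perturbation theory around the symmetric ones, combined, if necessary, with computer-verified sum-of-squares (Positivstellensatz) certificates for the residual polynomial positivity statements. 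The equality case of the conjecture then follows automatically: if $\Phi(G)<n$ strictly for every $G\ne I$, the test-point bound gives $\sup|p|>n^{-n/2}$ unless $G=I$, in which case both sides equal $n^{-n/2}$.
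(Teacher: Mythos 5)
Your reformulation is clean and, for linearly independent $v_i$, exact: writing $G=(\langle v_i,v_j\rangle)$ and using the dual basis, the conjectured inequality is equivalent to $\Phi(G):=\min\{t^TG^{-1}t:|t_1\cdots t_n|=1\}\le n$. This is a genuinely different route from the paper, which never passes to the Gram matrix: the paper fixes one specific test vector, the normalized \emph{longest signed sum} $v=\sum_i\varepsilon_iv_i$, observes that $\langle v_i,v\rangle\ge 1$ for all $i$, and thereby traps the numbers $a_i=\langle v_i,v\rangle/\Vert v\Vert$ in the box $[\Vert v\Vert^{-1},1]^n$ on the hyperplane $\sum_ia_i=\Vert v\Vert$. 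The whole problem then collapses to an explicit one-dimensional minimization of $\mu(s)=\min\prod a_i$ over $s=\Vert v\Vert\in[\sqrt n,n]$, which is solved in closed form and checked to satisfy $\mu(s)\ge n^{-n/2}$ precisely for $n\le 14$. In your coordinates this amounts to testing a single, explicitly constructed $t$ rather than minimizing $t^TG^{-1}t$ over all of them.

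The gap in your proposal is that the central claim, $\Phi(G)\le n$ for every positive definite unit-diagonal $G$ (for $n\le 14$), is never proved. You verify it only for $G=I$, for $n=2$, and for the one-parameter family $(1-c)I+cJ$; everything else is deferred to a program of ``symmetrisation, perturbation theory around symmetric configurations, and computer-verified sum-of-squares certificates'' that is not carried out and whose feasibility is doubtful: the Gram matrix is already the full orthogonal-invariant datum, so step (i) reduces nothing, and the parameter space is $\binom{n}{2}$-dimensional ($91$ dimensions for $n=14$), far beyond what the sketched case analysis addresses. Note also that $\Phi(G)\le n$ for \emph{all} $n$ would prove the full conjecture, which is known to be delicate (the paper's final remark recalls the Matolcsi--Mu\~noz example at $n=34$ showing that at least the longest-sum witness fails there), so some quantitative input tied to $n\le 14$ is unavoidable and is absent from your argument. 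Two further points need repair: (a) when $G$ is singular the problem does not reduce to ``a smaller instance'' --- it becomes $n$ functionals on $\mathbb R^m$ with $m<n$, a different and a priori harder statement, so you need a density/closedness argument for the inequality and a separate treatment for the equality case; (b) the strict inequality $\Phi(G)<n$ for $G\ne I$, which you invoke for the characterization of equality, is itself unproved (the paper derives the equality case from a separate lemma: if every signed sum $\sum_i\varepsilon_iv_i$ has norm exactly $\sqrt n$, then the system is orthonormal).
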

For $n\le5,$ the inequality was proved by A. Pappas and S. G. R\'ev\'esz in \cite{PR} (see also \cite{PPT}). However, the question remained unanswered for $n\ge 6.$

A complex analogue of inequality (\ref{BST}) was proved by J. Arias-de-Reyna in \cite{A}. More precisely, the author showed that for any set of unit vectors $\{z_i\}_{i=1}^n \subset \mathbb C^n$ it follows that
\begin{equation}\label{CV}
\sup\limits_{\Vert z \Vert_{\mathbb C^n} =1} \vert \langle z, z_1 \rangle \cdots \langle z, z_n\rangle\vert \ge n^{-n/2}.
\end{equation}
In \cite{Ba}, K. Ball proved a stronger result, which is known as ``{\it the complex plank problem for Hilbert spaces}'' and also implies inequality (\ref{CV}). Finally, in \cite{Pi}, the author showed that a set $\{z_i\}_{i=1}^n$ of unit vectors in a complex Hilbert space $H$ for which the equality is attained must be an orthonormal system.

Although the exact value of the $n-$th linear polarization constant of $\mathbb R^n$ is not known for all $n \in \mathbb N,$ there are several articles in the literature finding upper bounds for its value. In fact, the existence of $c \in\mathbb R$ such that $\ccc_n(\mathbb R^n)\le (cn)^{n/2}$ was studied by many authors: A. E. Litvak, V. D. Milman, and G. Schechtman \cite{LMS} for $c\approx 12,67,$ J. C. Garc\'{\i}a-V\'azquez and R. Villa \cite{G-VV} with $c\approx 3,57,$ S. G. R\'ev\'esz and Y. Sarantopoulos \cite{RS} proved that $\ccc_n(\mathbb R^n)\le 2^{n/2-1}\, n^{n/2},$ P. E. Frenkel \cite{F} improved the previous bound showing that $\ccc_n(\mathbb R^n)\le \left(3^{3/2} e^{-1}\right)^{n/2}\, n^{n/2},$ and G. A. Mu\~{n}oz-Fern\'{a}ndez et al. \cite{MSS} showed that $\ccc_n(\mathbb R^n) \le n2^{n/4}n^{n/2},$ which is asymptotically tighter than the previous bounds.

The aim of this work is to extend the validity range of Conjecture \ref{pol14}. In Section \ref{uno}, given $n \in \mathbb N,$ we study the minima of some constrained problems depending on a parameter $s.$ Namely, we have sets $\Sigma_s \subset \mathbb R^n,$ a function $f:\Sigma_s \to \mathbb R,$ and compute $\min\limits_{a \in \Sigma_s} f(a),$ and then we find the minima of $s \mapsto \min\limits_{a \in \Sigma_s} f(a).$ Finally, in Section \ref{614}, we will apply the results from Section \ref{uno} to prove, for $6\le n \le 14,$ that the $n-$th linear polarization constant of $\mathbb R^n$ is $n^{n/2}.$ Moreover, we show that we will have an equality in (\ref{BST}) if and only if $\{v_i\}_{i=1}^n$ is an orthonormal system.

\section{Some Useful Inequalities and Constrained Problems.}\label{uno}

In this section, for our purposes, we need to present and prove some useful inequalities. We will consider $n\in \mathbb N, n \ge 2, s \in \left[\sqrt{n},n\right]$ and $Q_s=[s^{-1}, 1]^n.$ Given the function $f:Q_s \to \mathbb R,$ defined by $f(a)=a_1 \cdots a_n,$ we are interested in finding the constrained minima
\[
\min_{a\in Q_s} f(a), \text{ subject to } \sum_{i=1}^n a_i=s.
\]
Let us denote by $\Sigma_s$ the set $Q_s \cap \left\{a\in \mathbb R^n : \sum_{i=1}^n a_i=s\right\},$ and define the function $\mu:\left[\sqrt{n},n\right]\to \mathbb R,$ by $\mu(s)=\min\limits_{a\in \Sigma_s} f(a).$

It is easy to see, applying Lagrange multipliers, that
\[
\max_{a\in \Sigma_s} f(a) = f\left(\frac{s}{n},\frac{s}{n}, \ldots, \frac{s}{n}\right)=\left(\frac{s}{n}\right)^n.
\]
So, we might suspect that $\mu(s)$ should be reached on the intersection of the hyperplane and a face of the cube. Moreover, it is reasonable to think that the function $f$ gets smaller as more coordinates of $a$ take the value $s^{-1}.$ Following this idea, let us define
\[
k_0(s)=\min\left\{k\in \mathbb N: n-k < s-k s^{-1}\right\}.
\]
It is clear that $1 \le k_0(s) \le n+1,$ and if $k_0(s) < n,$ this value gives us the first coordinate $k,$ such that any point $a\in Q_s,$
\[a=(\underbrace{s^{-1}, s^{-1}, \ldots, s^{-1}}_{k_0(s) - times}, a_{k_0(s)+1}, \ldots, a_n)
\]
does not belong to $\Sigma_s.$

\begin{remark}\label{k0} Note that from the very definition of $k_0(s),$ we have
\begin{equation}\label{unos}
n-k_0(s) < s-k_0(s) s^{-1},
\end{equation}
which is equivalent to
\[
\frac{s(n-s)}{s-1} < k_0(s).
\]
Also, since
\begin{equation}\label{dos}
s - (k_0(s)-1)s^{-1} \le n+1-k_0(s),
\end{equation}
we obtain
\begin{equation*}
k_0(s)\le \frac{s(n-s)}{s-1}+1.
\end{equation*}
As usual, if $\lfloor x \rfloor=\max\{m\in\mathbb Z: m \le x\}$ denotes the floor function, we can write
\[
k_0(s)=\left\lfloor\frac{s(n-s)}{s-1}\right\rfloor +1.
\]
\end{remark}

\begin{proposition}\label{minima} Let $f:Q_s \to \mathbb R$ be defined by
\[
f(a_1,a_2,\ldots, a_n)=a_1 a_2 \cdots a_n.
\]
Then,
\[
\mu(s)=s^{1-k_0(s)}\left(s-(k_0(s)-1)s^{-1}- n+k_0(s)\right).
\]
\end{proposition}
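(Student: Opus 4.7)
The plan is to minimize $f$ over the compact polytope $\Sigma_s$ by an exchange argument that reduces the search to a one-parameter family of boundary configurations, and then to identify the minimizer using the arithmetic definition of $k_0(s)$ together with inequalities (\ref{unos}) and (\ref{dos}) from Remark~\ref{k0}.

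First I would establish the structure of any minimizer. Suppose $a \in \Sigma_s$ has two coordinates $a_i \le a_j$ both lying strictly inside $(s^{-1}, 1)$. For small $\varepsilon > 0$ the perturbation $(a_i, a_j) \mapsto (a_i - \varepsilon, a_j + \varepsilon)$ stays inside $\Sigma_s$ and changes the product by $\varepsilon(a_i - a_j) - \varepsilon^2 < 0$, so $f(a)$ is not minimal. Hence any minimizer has at most one coordinate strictly in $(s^{-1}, 1)$; after relabeling, it is of the form $(s^{-1}, \ldots, s^{-1}, c, 1, \ldots, 1)$ with $k$ entries at $s^{-1}$, $n-k-1$ entries at $1$, and a free entry $c_k := s - ks^{-1} - (n-k-1)$ required to lie in $[s^{-1}, 1]$ for admissibility.

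Second, I would compare the values $f_k := s^{-k} c_k$ across admissible $k$. Since $c_{k+1} - c_k = 1 - s^{-1}$, a short computation gives
\[
\frac{f_{k+1}}{f_k} \;=\; \frac{1}{s} + \frac{s-1}{s^{2}\, c_k},
\]
which is $\le 1$ precisely when $c_k \ge s^{-1}$, i.e., throughout the admissible range. Thus $f_k$ is non-increasing in $k$, and the minimum occurs at the largest admissible value. By the definition of $k_0(s)$ together with (\ref{dos}), the constraint $c_k \le 1$ is equivalent to $k \le k_0(s) - 1$, and (\ref{unos}) gives $c_{k_0(s)-1} > s^{-1}$, so $k = k_0(s) - 1$ is admissible. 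Substituting this choice of $k$ into $f_k = s^{-k}c_k$ yields exactly the claimed expression
\[
\mu(s) \;=\; s^{1-k_0(s)}\bigl(s-(k_0(s)-1)s^{-1}-n+k_0(s)\bigr).
\]

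The main subtlety is handling the degenerate case $c_{k_0(s)-1}=1$, which occurs precisely when $s(n-s)/(s-1)$ is an integer: there the ``free'' coordinate collapses to the upper bound and two adjacent parametrizations describe the same point of $\Sigma_s$, but both give the same value of $f$, so the formula still holds. Apart from this bookkeeping, everything else follows mechanically from the linear recursion $c_{k+1} = c_k + (1 - s^{-1})$ and the integer threshold definition of $k_0(s)$ recorded in Remark~\ref{k0}.
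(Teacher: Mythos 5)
Your proposal is correct. The engine of your argument is the same as the paper's --- the exchange $(a_i,a_j)\mapsto(a_i-\ve,a_j+\ve)$ with $a_i\le a_j$, which strictly decreases the product --- but you deploy it differently in the second half. The paper applies the perturbation twice in a targeted way, using the defining inequality of $k_0(s)$ to show first that $a_{k_0(s)-1}=s^{-1}$ and then that the remaining coordinates are $\left(\widetilde{s}-n+k_0(s),1,\ldots,1\right)$; it never compares distinct boundary configurations. You instead use a single uniform exchange to conclude that a minimizer has at most one coordinate in the open interval $\left(s^{-1},1\right)$, which reduces the problem to the discrete family $f_k=s^{-k}c_k$ with $c_k=s-ks^{-1}-(n-k-1)$, and then you locate the minimum by the ratio computation $f_{k+1}/f_k=s^{-1}+(s-1)/(s^2c_k)\le 1$ exactly on the admissible range, identifying $k=k_0(s)-1$ as the largest admissible index via (\ref{unos}) and (\ref{dos}). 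Both routes are valid; yours makes the role of $k_0(s)$ more transparent (it is simply the admissibility threshold $c_k\le 1\Leftrightarrow k\le k_0(s)-1$) at the cost of the extra monotonicity computation, and your remark about the degenerate case $c_{k_0(s)-1}=1$ (equivalently $s=s_j$, where two adjacent parametrizations name the same point) correctly disposes of the only bookkeeping subtlety. The one boundary case worth a sentence is $s=\sqrt{n}$, where $k_0(s)=n+1$ and the parametrization with $k_0(s)-1=n$ entries at $s^{-1}$ plus a free entry does not literally exist; there $\Sigma_s$ degenerates to the single point $(s^{-1},\ldots,s^{-1})$ and the formula is verified directly, so nothing is lost.
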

\begin{proof}
By continuity of $f$ and compactness of $\Sigma_s$ we know that the minimum is attained at some point $a \in \Sigma_s.$ Since $f$ is a symmetric function, we may assume that
\[
s^{-1}\le a_1 \le a_2 \le \ldots \le a_n \le 1.
\]

First, let us show that $a_{k_0(s)-1}=s^{-1}.$ If not, from the definition of $k_0(s),$ we have that $n-(k_0(s)-1) \ge  s-(k_0(s)-1) s^{-1}.$ Then, we obtain
\begin{align*}
s=\sum_{i=1}^n a_i & > (k_0(s)-1)s^{-1} + \left(n+1-k_0(s)\right)a_{k_0(s)} \\
& \ge  s-n-1+k_0(s)+\left(n+1-k_0(s)\right)a_{k_0(s)} \\
& = s+\left(n+1-k_0(s)\right)\left(a_{k_0(s)}-1\right).
\end{align*}
This inequality implies that $a_{k_0(s)}<1.$ Now, taking $$\varepsilon\in \left(0, \min\left(a_{k_0(s)-1}-s^{-1}, 1-a_{k_0(s)}\right)\right),$$ we slightly perturb the values $a_{k_0(s)-1}$ and $a_{k_0(s)}$ by $\varepsilon$:
\begin{align*}
a_1\cdots a_{n} & \le  a_1\cdots \left(a_{k_0(s)-1}-\varepsilon\right)\left(a_{k_0(s)}+\varepsilon\right)\cdots a_{n} \\
& = a_1\cdots \left(a_{k_0(s)-1}a_{k_0(s)}+\varepsilon\left(a_{k_0(s)-1}-a_{k_0(s)}\right)-\varepsilon^2\right)\cdots a_{n} \\
& < a_1\cdots a_{n},
\end{align*}
which is impossible. Then, $a_{k_0(s)-1}=s^{-1}.$

Our next step is to find the minimum of $g: [s^{-1}, 1]^{n+1-k_0(s)}\to\mathbb R,$ defined by
\[
g(a_{k_0(s)}, \ldots, a_n):=f(s^{-1},s^{-1}, \ldots, s^{-1}, a_{k_0(s)}, \ldots, a_n),
\]
subject to the equality constraint
\[
\sum_{i=k_0(s)}^n a_i=s-(k_0(s)-1)s^{-1} \in \left(n-k_0(s)+s^{-1}, n-k_0(s)+1\right).
\]
Write $\widetilde{s}=s-(k_0(s)-1)s^{-1}.$ Let us prove that the minimum is attained at
\[
(a_{k_0(s)}, \ldots, a_n)=\left(\widetilde{s}- n+k_0(s),1,\ldots, 1\right).
\]
Note that if $a_{k_0(s)}> \widetilde{s}- n+k_0(s),$ then it must be $a_{k_0(s)+1}<1,$ and we can proceed as we did in the beginning: i.e. choosing $\varepsilon>0,$ small enough, we can see that
\begin{align*}
a_{k_0(s)}a_{k_0(s)+1}\cdots a_n \le & \left(a_{k_0(s)}-\varepsilon\right)\left(a_{k_0(s)+1}+\varepsilon\right)\cdots a_n \\
 < & \  a_{k_0(s)}a_{k_0(s)+1}\cdots a_n,
\end{align*}
which leads to a contradiction. Therefore, $a_{k_0(s)}=\widetilde{s}- n+k_0(s),$ and $a_{i}=1$ for $k_0(s)+1\le i \le n$. It follows immediately that
\[
\mu(s)=s^{1-k_0(s)}\left(s-(k_0(s)-1)s^{-1}- n+k_0(s)\right). \qedhere
\]
\end{proof}

\begin{corollary}\label{formula}
Let $f:Q_s \to \mathbb R$ be the function defined by $f(a_1,a_2,\ldots,a_n)=a_1a_2\cdots a_n.$ Then,
\[
\mu(s)=\frac{s^{-1}+s - n + \left(\left\lfloor\frac{s(n-s)}{s-1}\right\rfloor +1\right)(1-s^{-1})}{s^{\left\lfloor \frac{s(n-s)}{s-1}\right\rfloor}}.
\]
\end{corollary}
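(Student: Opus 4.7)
The plan is to deduce this formula as a direct algebraic consequence of Proposition \ref{minima} and the closed-form expression for $k_0(s)$ derived in Remark \ref{k0}. No new analysis is needed: I only have to substitute and simplify.

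Writing $k=k_0(s)$ for brevity, Proposition \ref{minima} gives
\[
\mu(s)=s^{1-k}\bigl(s-(k-1)s^{-1}-n+k\bigr).
\]
From Remark \ref{k0} we have $k=\left\lfloor\frac{s(n-s)}{s-1}\right\rfloor+1$, so $k-1=\left\lfloor\frac{s(n-s)}{s-1}\right\rfloor$, and hence $s^{1-k}=s^{-\lfloor s(n-s)/(s-1)\rfloor}$, which accounts for the denominator of the claimed expression. To recover the numerator, I would rearrange
\[
s-(k-1)s^{-1}-n+k = s^{-1}+s-n+k(1-s^{-1}),
\]
obtained by grouping $-(k-1)s^{-1}+k = -ks^{-1}+s^{-1}+k = s^{-1}+k(1-s^{-1})$. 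Substituting $k=\left\lfloor\frac{s(n-s)}{s-1}\right\rfloor+1$ into this expression produces exactly the numerator stated in the corollary, completing the derivation.

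There is no real obstacle here, since the identity is a purely formal consequence of the two preceding results. The only thing I would check is that the substitution is legitimate throughout $s\in[\sqrt{n},n]$: for these values one has $s(n-s)/(s-1)\in[0,n-1]$, so $k$ is a well-defined integer with $1\le k\le n$, consistent with the regime in which Proposition \ref{minima} was proved.
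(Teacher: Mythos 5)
Your derivation is exactly the paper's proof, which simply states that the corollary follows by combining Remark \ref{k0} (giving $k_0(s)=\left\lfloor\frac{s(n-s)}{s-1}\right\rfloor+1$) with Proposition \ref{minima}; your algebraic rearrangement of the numerator is correct. The only quibble is your closing sanity check: at $s=\sqrt{n}$ one has $\frac{s(n-s)}{s-1}=n$ and $k_0(s)=n+1$ (consistent with the paper's bound $1\le k_0(s)\le n+1$), not $k\le n$ as you claim, though this does not affect the substitution.
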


\begin{proof}
The proof is immediate by combining Remark \ref{k0} and Proposition \ref{minima}.
\end{proof}

\begin{remark}\label{mil} Examining inequalities (\ref{unos}) and (\ref{dos}), we deduce that
\[
n-k_0(s)<s - k_0(s)s^{-1}\le n+1-k_0(s)-s^{-1}<n+1-k_0(s).
\]
Then, we obtain
\begin{equation}\label{svida}
 n-k_0(s)=\left\lfloor s - k_0(s)s^{-1}\right\rfloor.
\end{equation}
\end{remark}

\begin{remark}\label{valorsj} The function $\mu$ is continuous for $s\in \left[\sqrt{n},n\right],$ except for those points $\{s_j\}_{j=0}^n$ where $\frac{s_j(n-s_j)}{s_j-1}=j.$ We can compute $s_j$ as the positive root of the equation $x^2-(n-j)x-j=0,$ namely
\[
s_j=\frac{(n-j)+\sqrt{(n-j)^2+4j}}{2}.
\]
Also, since $s \mapsto \frac{s(n-s)}{s-1}$ is a decreasing function, we have
\[
\sqrt{n}=s_n < s_{n-1} < \ldots < s_1< s_0=n.
\]
\end{remark}

\begin{proposition}\label{lsc} The function $\mu: \left[\sqrt{n},n\right] \to \mathbb R$ is lower semi-continuous.
\end{proposition}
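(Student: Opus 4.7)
My plan is to prove lower semi-continuity directly from the defining formula $\mu(s)=\min_{a\in\Sigma_s} f(a)$ via a standard sequential compactness argument, rather than working with the explicit piecewise description of $\mu$ furnished by Corollary~\ref{formula}. Fix $s_0\in[\sqrt{n},n]$ and let $\{s_k\}_{k\in\zN}\subset[\sqrt{n},n]$ be any sequence with $s_k\to s_0$. Since each $\Sigma_{s_k}$ is nonempty and compact and $f$ is continuous, I can pick a minimizer $a^{(k)}\in\Sigma_{s_k}$ with $f(a^{(k)})=\mu(s_k)$. All of these points lie in the fixed compact cube $[0,1]^n$, so after passing to a subsequence I may assume $a^{(k)}\to a^*\in[0,1]^n$.

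The main step is to verify that $a^*\in\Sigma_{s_0}$. Taking limits in the coordinate bounds $s_k^{-1}\le a_i^{(k)}\le 1$ and using the continuity of $s\mapsto s^{-1}$ on $[\sqrt{n},n]$ yields $s_0^{-1}\le a_i^*\le 1$, i.e.\ $a^*\in Q_{s_0}$. Likewise the linear constraint $\sum_{i=1}^n a_i^{(k)}=s_k$ passes to the limit, giving $\sum_{i=1}^n a_i^*=s_0$. Hence $a^*\in\Sigma_{s_0}$ and
\[
\mu(s_0)\le f(a^*)=\lim_{k\to\infty} f(a^{(k)})=\lim_{k\to\infty}\mu(s_k).
\]
Applying this to any subsequence that realizes $\liminf_k \mu(s_k)$, I obtain $\mu(s_0)\le\liminf_{k\to\infty}\mu(s_k)$, which is the desired lower semi-continuity at $s_0$.

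The argument has no serious obstacle; the one point to be a bit careful about is that the lower bound $s_k^{-1}$ defining $Q_{s_k}$ itself varies with $k$, but its continuity in $s$ on $[\sqrt{n},n]$ is enough to carry the inequality through to the limit. As a side observation, running the analogous argument in the opposite direction (perturbing a minimizer of $\mu(s_0)$ into $\Sigma_{s_k}$ for nearby $s_k$) would upgrade the statement to full continuity of $\mu$; however, only lower semi-continuity is needed for the applications in Section~\ref{614}.
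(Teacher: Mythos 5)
Your proof is correct, and it takes a genuinely different route from the paper. You argue abstractly: the value function $s\mapsto\min_{a\in\Sigma_s}f(a)$ of a parametric minimization is lower semi-continuous because the constraint sets $\Sigma_s$ are nonempty, uniformly bounded, and have closed graph in $(s,a)$, so a convergent subsequence of minimizers lands in $\Sigma_{s_0}$; the one delicate point, the $s$-dependence of the lower bound $s^{-1}$ in $Q_s$, you handle correctly by continuity. The paper instead works from the explicit piecewise formula of Corollary~\ref{formula}: it observes via Remark~\ref{valorsj} that the only candidates for discontinuity are the points $s_j$ where the floor jumps, notes that $s\mapsto\lfloor s(n-s)/(s-1)\rfloor$ is left-continuous so only the right-hand $\liminf$ needs checking, computes $\mu(s_j)=s_j^{-j}$ using the identity $s_j^2-(n-j)s_j-j=0$, and bounds $\liminf_{s\to s_j^+}\mu(s)$ from below by $s_j^{-j}$. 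Your approach is softer and more robust (it would survive any change to the explicit description of the minimizer), while the paper's calculation has the side benefit of producing the closed form $\mu(s_j)=s_j^{-j}$, which is exactly what Theorem~\ref{minimo} and Proposition~\ref{final} consume later; if your proof were substituted, that identity would have to be derived separately. Your closing side observation that the reverse perturbation would give full continuity is plausible (and in fact the one-sided limits at each $s_j$ do agree with $\mu(s_j)$), but as stated it is only a sketch and is not needed, so it is harmless.
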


\begin{proof} As we noted in Remark \ref{valorsj}, the function $\mu$ is continuous on its domain, except at the points $\{s_j\}_{j=0}^n.$ Then, it remains to show that $\liminf\limits_{s\to s_j} \mu(s)\ge \mu(s_j)$ for $0\le j \le n.$ However, since $s \mapsto \frac{s(n-s)}{s-1}$ decreases, we know that $s \mapsto \left\lfloor \frac{s(n-s)}{s-1}\right\rfloor$ is continuous from the left, so it is not necessary to study the case $j=0$ and we only have to check that $\liminf\limits_{s\to s_j^+} \mu(s)\ge \mu(s_j)$ for $1\le j \le n.$

First, let us compute $\mu(s_j).$ We can write, from Corollary \ref{formula},
\begin{align*}
\mu(s_j)=&\ \frac{s_j^{-1}+s_j - n + \left(\left\lfloor\frac{s_j(n-s_j)}{s_j-1}\right\rfloor +1\right)(1-s_j^{-1})}{s_j^{\left\lfloor \frac{s_j(n-s_j)}{s_j-1}\right\rfloor}} \\
=&\ \frac{s_j^{-1}+s_j - n + \left(j +1\right)(1-s_j^{-1})}{s_j^{j}}\\
= &\ \frac{1+s_j^2 - (n-j-1)s_j-(j +1)}{s_j^{j+1}} \\
=&\ \frac{s_j^2 - (n-j)s_j-j+s_j}{s_j^{j+1}}=\frac{1}{s_j^j},
\end{align*}
where in the last step we have used that $s_j^2-(n-j)s_j-j=0.$

For $s>s_j,$ close enough, $\left\lfloor\frac{s(n-s)}{s-1}\right\rfloor +1=k_{0}(s)=k_0(s_j)-1=j.$ Then, combining with (\ref{svida}), we have
\begin{align*}
\liminf\limits_{s \to s_j^+} \mu(s)=&\liminf\limits_{s \to s_j^+}\frac{s^{-1}+s - js^{-1}-\left\lfloor s - js^{-1}\right\rfloor }{s^{j-1}} \\
\ge & \liminf\limits_{s \to s_j^+}\frac{s^{-1}}{s^{j-1}}=\frac{1}{s_j^j}=\mu(s_j). \qedhere
\end{align*}
\end{proof}

The following lemma is crucial to determine the minimum of the function $\mu.$

\begin{lemma}\label{auxiliar} Given $j, n\in \mathbb N,$ such that $n \ge 2$ and $1\le j \le n,$ then the function $M_j: [s_{j}, s_{j-1}] \to \mathbb R,$ defined by
\[
M_j(x)=x^{2-j}+(j-n)x^{1-j}+(1-j)x^{-j}
\]
is a quasi-concave function.
\end{lemma}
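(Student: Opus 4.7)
The plan is to compute $M_j'$ directly and show that on $[s_j,s_{j-1}]$ the derivative changes sign at most once, with the only possible change being from positive to negative. This forces $M_j$ to be monotone or to rise and then fall on the interval, which is exactly quasi-concavity. A routine differentiation followed by factoring out the common monomial $x^{-j-1}$ yields
\[
M_j'(x) = (2-j)x^{1-j} + (j-n)(1-j)x^{-j} + j(j-1)x^{-j-1} = x^{-j-1}\,Q(x),
\]
where
\[
Q(x) = (2-j)x^{2} + (n-j)(j-1)\,x + j(j-1).
\]
Since $x^{-j-1}>0$ on $(0,\infty)$, the sign of $M_j'$ is governed entirely by the sign of the quadratic $Q$.

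I then split into cases according to $j$. For $j=1$ one has $Q(x)=x^{2}$, and for $j=2$ one has $Q(x)=(n-2)x+2$; both are strictly positive on $(0,\infty)$ (using $n\ge 2$), so $M_j$ is nondecreasing on $[s_j,s_{j-1}]$ and therefore quasi-concave. For $j\ge 3$ the leading coefficient $2-j$ is negative while $Q(0)=j(j-1)>0$, so $Q$ is a downward-opening parabola that is strictly positive at the origin. Hence $Q$ has exactly one positive root $r_+$ (and one negative root), and $Q$ is positive on $(0,r_+)$ and negative on $(r_+,\infty)$. Consequently, restricted to $[s_j,s_{j-1}]\subset(0,\infty)$, the function $M_j'$ is either one-signed throughout (when $r_+$ falls outside the interval) or changes sign exactly once from $+$ to $-$ (when $r_+$ lies inside).

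In every case $M_j$ is either monotone or is first non-decreasing and then non-increasing on $[s_j,s_{j-1}]$, which is precisely the definition of quasi-concavity on an interval. The argument is essentially a sign analysis of a single quadratic; the only step that requires any care is the derivative computation and the observation that factoring out $x^{-j-1}$ reduces the whole question to the sign pattern of $Q$ on $(0,\infty)$, with no delicate estimate involving the explicit values of the endpoints $s_j$ or $s_{j-1}$ required. The potential obstacle — namely, that $r_+$ could in principle sit at an awkward place relative to $[s_j,s_{j-1}]$ — is harmless because the resulting monotonicity pattern is unimodal in every configuration.
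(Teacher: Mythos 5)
Your proof is correct and follows essentially the same route as the paper: differentiate, factor out $x^{-j-1}$, handle $j=1,2$ separately, and reduce quasi-concavity for $j\ge 3$ to the sign pattern of the downward-opening quadratic $Q$. The only (harmless) difference is in how the ``positive-then-negative'' pattern is certified: the paper evaluates the quadratic at $x=s_j$, where it equals $s_j^2>0$ because $s_j^2+(j-n)s_j-j=0$, while you use $Q(0)=j(j-1)>0$ together with the negative leading coefficient to locate the unique positive root; both yield the same conclusion.
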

\begin{proof} First, note that for $j=1$ and $j=2,$ the function $M_j$ is concave and the lemma follows. For the remaining cases, when $3\le j\le n,$ we will prove that $M_j$ satisfies one of the following conditions:
\begin{itemize}
\item $M_j$ is an increasing function on $[s_{j}, s_{j-1}].$
\item There exists $t_j \in (s_{j}, s_{j-1})$ such that $M_j$ is an increasing function on $[s_{j}, t_j]$ and it is a decreasing function on $[t_j, s_{j-1}].$
\end{itemize}

Let us compute
\begin{align*}
M_j^{\prime}(x)&=(2-j)x^{1-j}+(j-n)(1-j)x^{-j}-j(1-j)x^{-j-1} \\
&=\frac{(2-j)x^{2}+(j-n)(1-j)x-j(1-j)}{x^{j+1}}\\
&=\frac{x^2+(1-j)\left(x^{2}+(j-n)x-j\right)}{x^{j+1}}.
\end{align*}
Then, to determine the behaviour of $M_j$ it will be enough to analyze the sign of the concave quadratic function
\[
x \mapsto x^2+(1-j)\left(x^{2}+(j-n)x-j\right).
\]
By definition, $s_{j}^{2}+(j-n)s_j-j=0.$ We have $M_j'(s_j)>0,$ then $M_j$ has at most one critical point in the interval $\left[s_{j}, s_{j-1}\right],$ and the assertion is proved.
\end{proof}

Finally, we need the following proposition in order to prove the main theorem of this section.

\begin{proposition}\label{final} Given $n \in \mathbb N,\ 2 \le n \le 14,$ and $0 \le j \le n,$ then $s_j^j \le \sqrt{n^n}.$
\end{proposition}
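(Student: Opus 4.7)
The plan is to reduce the proposition to a finite check by exploiting the strict monotonicity $\sqrt n = s_n < s_{n-1} < \cdots < s_0 = n$ observed in Remark~\ref{valorsj}. First I would dispose of the trivial endpoints: for $j=0$ one has $s_0^0 = 1 \le \sqrt{n^n}$ since $n\ge 2$, and for $j=n$ the value $s_n = \sqrt n$ yields $s_n^n = n^{n/2} = \sqrt{n^n}$, with equality.

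Next I would handle the whole range $1 \le j \le n/2$ uniformly. The monotonicity gives $s_j \le s_0 = n$, so
\[
s_j^j \;\le\; n^j \;\le\; n^{n/2} \;=\; \sqrt{n^n}.
\]
After this, only the finite list of pairs $(n,j)$ with $2 \le n \le 14$ and $n/2 < j \le n-1$ remains. For each such pair I would verify the bound directly from the closed form $s_j = \big((n-j)+\sqrt{(n-j)^2+4j}\big)/2$. Equivalently, $s_j$ is the larger root of the monic quadratic $P(x)=x^2-(n-j)x-j$, which is strictly increasing beyond $s_j$, so the desired bound $s_j \le n^{n/(2j)}$ becomes equivalent to $P(n^{n/(2j)})\ge 0$, i.e.,
\[
n^{n/j} - (n-j)\,n^{n/(2j)} - j \;\ge\; 0,
\]
and this can be checked pair by pair.

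The main obstacle is the extreme tightness of the inequality near the top of the allowed range. At $(n,j)=(14,13)$ the values $s_{13}^{13}$ and $14^7$ differ by only a very small margin, and indeed the analogous bound already fails at $n=15$, so the hypothesis $n\le 14$ is sharp. Hence the borderline cases must be verified carefully, either with sufficient numerical precision or rigorously by squaring to eliminate the surd in $s_j$ and reducing to a polynomial inequality in $n$ and $j$. I note also that no purely continuous argument (treating $j$ as a real parameter) can succeed for $n\ge 13$: there the continuous analogue already fails, so some case distinction is intrinsically required in this regime.
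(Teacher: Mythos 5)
Your outline is correct, but it takes a genuinely different route from the paper on the hard range $\lfloor n/2\rfloor+1\le j\le n-1$. Both arguments share the same skeleton: the endpoint cases, the monotonicity $s_n<\cdots<s_0$ to dispose of $j\le n/2$, and the key equivalence $s_j\le n^{n/(2j)}\Leftrightarrow \left(n^{n/(2j)}\right)^2-(n-j)n^{n/(2j)}-j\ge 0$, which is literally the paper's inequality (\ref{crece}). Where you diverge is in how this finite family of inequalities is certified. The paper substitutes $x=n^{n/(2j)}$, treats $j=\ln(\sqrt{n^n})/\ln x$ as a function of $x$, and proves (via a calculus argument on $\varphi(x)=\frac{x^2-2x+n}{x(x-n)^2}$) that the resulting function $\phi$ is increasing on the interval $J_n$ corresponding to $\lfloor n/2\rfloor+1\le j\le n-1$; this reduces everything to two checks per $n$ (the monotonicity certificate and the single endpoint inequality $s_{n-1}^{n-1}\le\sqrt{n^n}$), and in passing identifies $j=n-1$ as the critical case and pinpoints exactly where the method breaks ($n=15$ for the endpoint, $n=16$ for the monotonicity). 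You instead verify all roughly forty pairs $(n,j)$ directly; this is more elementary (no calculus), each check can be made fully rigorous by clearing the surd as you say, but it yields less structural information. Your closing observation is also essentially right and worth keeping: for $n=13,14$ one can check that $s_j^j>\sqrt{n^n}$ at some non-integer $j\in(n-1,n)$, so integrality of $j$ must enter somewhere; the paper exploits it by capping its continuous argument at $j=n-1$, you exploit it by enumeration. One small caution: the tightest case $(n,j)=(14,13)$ has a margin of only about $0.4\%$, so that verification must be done with certified precision, but this is routine.
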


\begin{proof} From Remark \ref{valorsj} we know that
\[
\sqrt{n}=s_n < s_{n-1} < \ldots < s_1< s_0=n,
\]
then
\begin{equation*}\label{lamitad}
\left(\sqrt{n}\right)^{n/2}=\left(s_n\right)^{n/2} <\left(s_{n-1}\right)^{n/2} < \ldots < \left(s_1\right)^{n/2}<\left(s_0\right)^{n/2}=n^{n/2}.
\end{equation*}

\medskip

\noindent The last inequalities show that $s_j^j \le \sqrt{n^n}$ for $j \le \lfloor n/2 \rfloor.$ Then, we can restrict ourself to $3 \le n \le 14,$ and $\lfloor n/2 \rfloor +1  \le j \le n-1.$ Note that, for such values of $j,$
\[
\sqrt{n}<n^{\frac{n}{2(n-1)}} \le n^{\frac{n}{2j}} \le n^{\frac{n}{2(\lfloor n/2 \rfloor +1)}} < n.
\]
We will prove that
\begin{equation}\label{crece}
\left(n^{n/2j}\right)^2-(n-j)n^{n/2j}-j \ge 0,
\end{equation}
which is equivalent to $s_j \le n^{n/2j}.$
Let us write inequality (\ref{crece}) as follows
\begin{equation}\label{crece2}
\left(n^{n/2j}\right)^2-nn^{n/2j} \ge (1-n^{n/2j})j.
\end{equation}
If we call $x=n^{n/2j},$ then we have $j=\frac{\ln\left(\sqrt{n^n}\right)}{\ln(x)},$ and inequality (\ref{crece2}) becomes
\begin{equation}\label{crece3}
\frac{1-x}{x^2-nx} - \frac{\ln(x)}{\ln\left(\sqrt{n^n}\right)} \ge 0 \quad \mbox{ for } x \in J_n=\left[n^{\frac{n}{2(n-1)}}, n^{\frac{n}{2(\lfloor n/2 \rfloor +1)}}\right].
\end{equation}
Note that $J_n \subset \left(\sqrt{n},n\right)$ and, in order to prove inequality (\ref{crece3}), we may define the function $\phi:(0,n) \to \mathbb R$ by
\[
\phi(x)=\frac{1-x}{x^2-nx} - \frac{\ln(x)}{\ln\left(\sqrt{n^n}\right)},
\]
and show that $\phi\left(n^{\frac{n}{2(n-1)}}\right)\ge 0$ and that $\phi(x)$ is an increasing function on $J_n.$ For this purpose we can study
\[
\phi^{\prime}(x)=\frac{x^2-2x+n}{x^2(x-n)^2}-\frac{1}{x \ln\left(\sqrt{n^n}\right)}.
\]
To prove that $\phi^{\prime}(x) > 0,$ we may write
\[
\phi^{\prime}(x)=\frac{1}{x \ln\left(\sqrt{n^n}\right)}\left(\ln\left(\sqrt{n^n}\right)\ \frac{x^2-2x+n}{x(x-n)^2}-1\right),
\]
and prove that
\[
\ln\left(\sqrt{n^n}\right)\ \frac{x^2-2x+n}{x(x-n)^2}> 1 \quad \mbox{ for all } x \in J_n.
\]
Let us analyze the function $\varphi(x)=\dfrac{x^2-2x+n}{x(x-n)^2}.$ Its derivative is just
\[
\varphi^{\prime}(x)=\dfrac{x^3+\left(n-4\right)x^2+3nx-n^2}{x^2\left(n-x\right)^3}.
\]
The sign of $\varphi^{\prime}(x)$ over the interval $J_n$ depends on the sign of the function
\[
x \mapsto x^3+\left(n-4\right)x^2+3nx-n^2.
\]
But this is a monotone increasing function (for $2 \le n \le 14$) because its derivative, $3x^2+2(n-4)x+3n,$ has no real roots and is positive on $\mathbb R.$  In fact, the discriminant $\Delta=4(n^2-17n+16)$ could be factored as $\Delta=4(n-1)(n-16),$ which is negative for $2 \le n \le 15.$ Then, for $x \in \left[\sqrt{n},n\right)$ we obtain
\begin{align*}
\sign\left(\varphi^{\prime}(x)\right)=&\ \sign\left(\sqrt{n}^3+\left(n-4\right)n+3n\sqrt{n}-n^2\right)\\
=& \ \sign\left(4\left(\sqrt{n^3}-n\right)\right)=1.
\end{align*}
Since $\varphi$ is an increasing function on $J_n,$ we may show that $\ln\left(\sqrt{n^n}\right) \varphi\left(n^{\frac{n}{2(n-1)}}\right)>1,$ i.e.
\[
\ln\left(\sqrt{n^n}\right)\ \frac{\left(n^{\frac{n}{2(n-1)}}\right)^2-2\left(n^{\frac{n}{2(n-1)}}\right)+n}{\left(n^{\frac{n}{2(n-1)}}\right)\left(\left(n^{\frac{n}{2(n-1)}}\right)-n\right)^2}> 1,
\]
to conclude that $\phi'(x)>0$ on $J_n.$ Once this is done, it remains to check that $\phi\left(n^{\frac{n}{2(n-1)}}\right)>0$ to ensure that the inequality (\ref{crece3}) is satisfied. But, as we said, this is equivalent to show that ${s_{n-1}^{n-1}\le \sqrt{n^n}.}$

The following table contains these values for $3\le n \le 16.$

\begin{table}[!ht]
\begin{center}
\scalebox{0.95}{
\begin{tabular}{|c|c|c|c|}
\hline
& & & \\
$n$ & $\ln(\sqrt{n^n})\ \frac{\left(n^{\frac{n}{2(n-1)}}\right)^2-2\left(n^{\frac{n}{2(n-1)}}\right)+n}{\left(n^{\frac{n}{2(n-1)}}\right)\left(\left(n^{\frac{n}{2(n-1)}}\right)-n\right)^2}$ & $S_{n-1}^{n-1}$ & $\sqrt{n^n}$\\
& & & \\ \hline
$3$ & $\approx 5,065$ & 4 & $\approx 5,196$ \\ \hline

$4$ & $\approx 2,666$ & $\approx 12,211$ & $16$  \\ \hline

$5$ & $\approx 2,008$ & $\approx 43,053$  & $\approx 55,901$\\ \hline

$6$ & $\approx 1,698$ & $\approx 169,442$  & $216$ \\ \hline

$7$ & $\approx 1,514$ & $729 $ & $\approx 907,492$\\ \hline

$8$ & $\approx 1,389$ & $\approx 3380,607$	 & $4096$ \\ \hline

$9$ & $\approx 1,298$ & $\approx  16725,933$  & $19683$ \\ \hline

$10$ & $\approx 1,227$ & $\approx 87610,098$ & $100000$ \\ \hline

$11$ & $\approx 1,170$ & $\approx 482892,455$ & $\approx 534145,739$ \\ \hline

$12$ & $\approx 1,123$ & $\approx 2787117,027$ &  $2985984$ \\ \hline

$13$ & $\approx 1,084$ & $16777216$ & $\approx 17403307,350$ \\ \hline

$14$ & $\approx 1,049$ & $\approx 104973424,100$ & $105413504$ \\ \hline

$15$ & $\approx 1,019$ & $\approx 680750436,468$ & $\approx 661735513,918$ \\ \hline

$16$ & $\approx 0,992$ & $4564290812,351$ & $4294967296$ \\ \hline
\end{tabular}
}
\end{center}
\end{table}
The second columm shows that, for $3\le n \le 15, \ln\left(\sqrt{n^n}\right)\, \varphi\left(n^{\frac{n}{2(n-1)}}\right)$ is greater than 1. Comparing the third with the fourth column, we see that $s_{n-1}^{n-1}$ is less than or equal to $\sqrt{n^n}$ for $3\le n \le 14.$ As both inequalities are fulfilled for $3\le n \le 14,$ the assertion is proved.
\end{proof}

\begin{remark} Note that for $n=15$ and $n=16$ we have ${s_{n-1}^{n-1} >\sqrt{n^{n}}}.$ Moreover, for $n=16$ we can have $\phi'(x)<0$ at some points.
\end{remark}

\begin{remark}\label{ocond} Actually we have proved that $s_j^j < \sqrt{n^n},$ unless $j=n.$
\end{remark}

\begin{theorem}\label{minimo} Given $n \in \mathbb N,\ 2 \le n \le 14,$ let $f:Q_s \to \mathbb R$ be the function defined by $f(a_1,a_2,\ldots, a_n)=a_1 a_2 \cdots a_n.$ If we consider $\mu: \left[\sqrt{n},n\right] \to \mathbb R$, where ${\mu(s)=\min\limits_{a\in \Sigma_s} f(a),}$ then
\[
\mu(s) \ge \frac{1}{\sqrt{n^n}}.
\]
Moreover, the minimum is attained only at $s=\sqrt{n}.$
\end{theorem}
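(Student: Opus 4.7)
The strategy is to partition the domain into the closed subintervals $[s_j, s_{j-1}]$ for $j = 1, \ldots, n$ (with $s_0 = n$ and $s_n = \sqrt{n}$, as in Remark \ref{valorsj}), and analyze $\mu$ piecewise. On such an interval the value of $k_0(s)$ is constant and equal to $j$, so from Corollary \ref{formula} one obtains
\[
\mu(s) = s^{2-j} + (j-n)s^{1-j} + (1-j)s^{-j} = M_j(s),
\]
the function studied in Lemma \ref{auxiliar}. Multiplying $M_j(x)$ by $x^j$ gives the quadratic $x^2 + (j-n)x + (1-j)$, which at $x = s_j$ evaluates to $1$ using the defining relation $s_j^2 - (n-j)s_j - j = 0$; an analogous computation at $x = s_{j-1}$ uses $s_{j-1}^2 - (n-j+1)s_{j-1} - (j-1) = 0$. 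These calculations yield the clean endpoint identities $M_j(s_j) = 1/s_j^j$ and $M_j(s_{j-1}) = 1/s_{j-1}^{j-1}$.

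Now Lemma \ref{auxiliar} tells us $M_j$ is quasi-concave on $[s_j, s_{j-1}]$, so its minimum over this interval is attained at one of the endpoints and equals $\min\{1/s_j^j,\ 1/s_{j-1}^{j-1}\}$. Proposition \ref{final} gives $s_k^k \le \sqrt{n^n}$ for every $0 \le k \le n$, so both endpoint values are at least $1/\sqrt{n^n}$. Taking the infimum over all the subintervals yields $\mu(s) \ge 1/\sqrt{n^n}$ on $[\sqrt{n}, n]$.

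For uniqueness of the minimizer, I would invoke Remark \ref{ocond}: the inequality $s_k^k \le \sqrt{n^n}$ is strict except when $k = n$, so the only boundary point $s_k$ at which an endpoint value equals $1/\sqrt{n^n}$ is $s_n = \sqrt{n}$. On every interval $[s_j, s_{j-1}]$ with $j \le n-1$, both endpoint values of $M_j$ strictly exceed $1/\sqrt{n^n}$, and quasi-concavity then forces strict inequality throughout the interior. The one delicate step is on $[s_n, s_{n-1}]$, where $M_n(s_n) = 1/\sqrt{n^n}$: here I would combine the unimodal structure from Lemma \ref{auxiliar} with the fact that $M_n'(s_n) > 0$ (established inside that lemma's proof) to conclude $M_n(s) > 1/\sqrt{n^n}$ for every $s \in (s_n, s_{n-1}]$, either because $M_n$ is increasing throughout, or because it peaks at some interior $t_n$ and then decreases to the value $M_n(s_{n-1}) = 1/s_{n-1}^{n-1} > 1/\sqrt{n^n}$. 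This strictness argument on the extremal interval is the main obstacle; everything else is a direct assembly of the lemmas already in place.
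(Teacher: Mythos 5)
Your proof is correct and follows essentially the same route as the paper: the decomposition of $\left[\sqrt{n},n\right]$ at the points $s_j$, the identification of $\mu$ with $M_j$ on each piece, quasi-concavity from Lemma \ref{auxiliar}, the endpoint values $1/s_j^j$, and the appeal to Proposition \ref{final} and Remark \ref{ocond} (the paper routes the endpoint computation through the lower semi-continuity argument of Proposition \ref{lsc}, but this is only a bookkeeping difference). One small quibble: $k_0(s_j)=j+1$ rather than $j$ at the left endpoint of $[s_j,s_{j-1}]$, yet since $M_j(s_j)=1/s_j^{j}=\mu(s_j)$ the identity $\mu=M_j$ on the closed interval survives, and your explicit strictness argument on $[s_n,s_{n-1}]$ for the uniqueness claim is, if anything, more careful than the paper's.
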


\begin{proof} Let us begin by restricting $\mu$ to the open interval $I_j=(s_j, s_{j-1}),$ for ${1 \le j \le n.}$  From Corollary \ref{formula} we can write
\[
\mu(s)=\frac{s^{-1}+s - n + \left(\left\lfloor\frac{s(n-s)}{s-1}\right\rfloor +1\right)(1-s^{-1})}{s^{\left\lfloor \frac{s(n-s)}{s-1}\right\rfloor}}.
\]
Since $\left\lfloor\frac{s(n-s)}{s-1}\right\rfloor +1=j$ for any $s \in I_j,$ we obtain
\[
(\mu_{|I_j})(s)=\frac{s^{-1}+s - n + j(1-s^{-1})}{s^{j-1}}=s^{2-j}+(j-n)s^{1-j}+(1-j)s^{-j}.
\]

Recall that $\mu$ is a lower semi-continuous function, and it must attain its minimum on any compact set. Therefore, there exists some point ${\bf\sc{s_j}}\in [s_j, s_{j-1}],$ such that $\mu(s)\ge \mu({\bf\sc{s_j}})$ for all $s \in [s_j, s_{j-1}].$

For any $s \in (s_j, s_{j-1}),$ the evaluation of $\mu(s)$ coincides with the evaluation of the  function $M_j: \left[s_{j}, s_{j-1}\right] \to \mathbb R,$ considered in Lemma \ref{auxiliar}. Then it follows that ${\bf\sc{s_j}}$ does not belong to the open interval $(s_j, s_{j-1}).$

Since
\[
\min_{s\in \left[\sqrt{n},n\right]} \mu(s)=\min_{1\le j \le n}\ \min_{s\in [s_j, s_{j-1}]} \mu(s),
\]
the minimum of $\mu(s): \left[\sqrt{n},n\right] \to \mathbb R$ must be attained at ${\bf\sc{s}} \in \{s_j\}_{j=0}^n,$ and it suffices to prove
\[
\mu(s_j) \ge \dfrac{1}{\sqrt{n^n}}
\]
for $0 \le j \le n.$ Then, the proof follows from Proposition \ref{final} and Remark \ref{ocond}.
\end{proof}

\section{The $n-$th linear polarization constant of $\mathbb R^n.$}\label{614}

Let us begin by recalling that in order to show the equality $\ccc_n(\mathbb R^n)=\sqrt{n^n},$ it is enough to prove that for any set of unit vectors $\{v_i\}_{i=1}^n \subset \mathbb R^n,$ there exists a norm one vector $x \in \mathbb R^n$ such that
\begin{equation}\label{PR}
\vert \langle x, v_1 \rangle \cdots \langle x, v_n\rangle\vert \ge n^{-n/2}.
\end{equation}
In \cite{PR}, the authors ensure the existence of a norm one vector $x \in \mathbb R^n$ satisfying inequality (\ref{PR}) for $n=2, 3, 4$ and $5.$ The proof is based on an appropriate choice of signs $\{\varepsilon_i\}_{i=1}^n$ such that maximizes the euclidean norm of $\sum_{i=1}^n \varepsilon_i v_i.$ Then, the desired vector is
\[
x=\frac{\sum_{i=1}^n \varepsilon_i v_i}{\left\Vert \sum_{i=1}^n \varepsilon_i v_i\right\Vert}.
\]

\bigskip

Note that for any choice of signs we have
\[
\left\Vert \sum_{i=1}^n \varepsilon_i v_i \right\Vert_2^2=\sum_{i=1}^n\left\Vert \varepsilon_i v_i \right\Vert_2^2+ 2 \sum_{1\le i \neq j \le n} \varepsilon_i \varepsilon_j \langle v_i, v_j \rangle.
\]
If we consider the random vector of signs $(\varepsilon)_j=(\varepsilon_{j_1}, \ldots, \varepsilon_{j_n}),$ all with equal probability of being chosen, the mean of the squared norm is
\[
\frac{1}{2^n}\sum_{j=1}^{2^n} \ \left\Vert \sum_{i=1}^n \varepsilon_{j_i} v_i \right\Vert_2^2=n.
\]
For our purposes we may assume that the choice of signs maximizing $\left\Vert \sum_{i=1}^n \varepsilon_i v_i \right\Vert_2^2$ is just $(\varepsilon)_j=(1, \ldots, 1).$ In the sequel we will consider sets of unit vectors $\{v_i\}_{i=1}^n$ such that the {\it longest sum} of them is $v=\sum_{i=1}^n v_i.$ Of course, it satisfies
\[
\sqrt{n} \le \Vert v \Vert \le n.
\]
For this vector $v,$ we have $\langle v,v\rangle \ge \langle v-2v_i,v-2v_i\rangle$ for all $1 \le i \le n.$ Then,
\[
\Vert v \Vert^2 \ge \Vert v \Vert^2 - 4 \langle v_i, v \rangle + 4.
\]
It follows that $\langle v_i, v \rangle \ge 1$ for $1 \le i \le n$ (see \cite{PR} for further details).

\bigskip

Although the following result is known for $2 \le n \le 5$ (see \cite{PR}), we include these cases in the statement of our main theorem.
\begin{theorem} Given $2 \le n \le 14,$ then $\ccc_n(\mathbb R^n)=\sqrt{n^n}.$
\end{theorem}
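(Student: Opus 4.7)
The plan is to apply Theorem \ref{minimo} directly to the data produced by the longest-sum construction already recorded in this section. Given unit vectors $\{v_i\}_{i=1}^n \subset \mathbb R^n$, after flipping signs we may assume the longest sum is $v = \sum_{i=1}^n v_i$, so that $s := \Vert v\Vert \in [\sqrt n, n]$ and $\langle v_i, v\rangle \ge 1$ for every $i$. I would take the test vector $x = v/\Vert v\Vert$ and set $a_i = \langle x, v_i\rangle$. Cauchy--Schwarz gives $a_i \le 1$, the longest-sum inequality gives $a_i \ge 1/s$, and summing yields $\sum_i a_i = \Vert v\Vert^2/\Vert v\Vert = s$. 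Hence $(a_1,\ldots, a_n)\in \Sigma_s$, and Theorem \ref{minimo} immediately produces
\[
\vert\langle x,v_1\rangle\cdots\langle x,v_n\rangle\vert = \prod_{i=1}^n a_i \ge \mu(s) \ge n^{-n/2},
\]
which is inequality (\ref{PR}). Combined with the AM--GM lower bound $\ccc_n(\mathbb R^n)\ge n^{n/2}$ already recorded in the introduction, this gives $\ccc_n(\mathbb R^n)=\sqrt{n^n}$ for $2\le n\le 14$.

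For the ``if and only if'' statement I would argue both directions. If $\{v_i\}$ is orthonormal, AM--GM pins the supremum at exactly $n^{-n/2}$, with equality attained at $x=\frac{1}{\sqrt n}\sum_i v_i$. Conversely, assume the supremum equals $n^{-n/2}$. Then the test vector $x=v/\Vert v\Vert$ must saturate the whole chain above, forcing $\prod_i a_i=\mu(s)=n^{-n/2}$. By Theorem \ref{minimo} the value $n^{-n/2}$ is attained by $\mu$ only at $s=\sqrt n$, and at that value the constraint set $\Sigma_{\sqrt n}$ reduces to the single point $(1/\sqrt n,\ldots,1/\sqrt n)$. Consequently $\Vert v\Vert=\sqrt n$ and $\langle v_i,v\rangle = 1$ for every $i$.

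It remains to upgrade these two rigidity conditions into pairwise orthogonality of the $v_i$. Two observations do the job. Expanding $\Vert v\Vert^2$ gives $\sum_{i\ne j}\langle v_i,v_j\rangle = \Vert v\Vert^2-\sum_i\Vert v_i\Vert^2 = 0$. Then, for any pair $k\ne l$, I would compare $\Vert v\Vert^2 = n$ with the squared norm of the sum obtained by flipping two signs: a short expansion yields $\Vert v-2v_k-2v_l\Vert^2 = n+8\langle v_k,v_l\rangle$, so the longest-sum assumption forces $\langle v_k,v_l\rangle\le 0$ for every $k\ne l$. A nonpositive family summing to zero must vanish identically, hence the Gram matrix of $\{v_i\}$ is the identity.

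The main obstacle is not the analytic bound, which falls out in a single line from Theorem \ref{minimo} once the test vector $x=v/\Vert v\Vert$ is used, but the rigidity step in the equality case. There one needs both the uniqueness of the minimizer of $\mu$ supplied by Theorem \ref{minimo} (to pin down $s=\sqrt n$ and $a_i=1/\sqrt n$) and the additional information coming from multi-sign flips of the longest-sum condition, which is what ultimately upgrades $\langle v_i,v\rangle=1$ into $\langle v_i,v_j\rangle = 0$ for $i\ne j$.
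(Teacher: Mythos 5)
Your first paragraph is exactly the paper's proof of this theorem: reduce to the longest sum $v$, test at $x=v/\Vert v\Vert$, verify that $(a_1,\ldots,a_n)\in\Sigma_s$ with $s=\Vert v\Vert\in[\sqrt n,n]$, and invoke Theorem \ref{minimo}; together with the AM--GM lower bound this is correct and complete for the statement $\ccc_n(\mathbb R^n)=\sqrt{n^n}$.

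The equality characterization you append is actually the content of a separate theorem later in the paper, but since you included it: your rigidity argument is valid and differs from the paper's. The paper first proves a lemma that if \emph{every} signed sum $\sum\varepsilon_i v_i$ has squared norm $n$ then the system is orthonormal, and deduces this hypothesis from the fact that the maximal squared norm $\Vert v\Vert^2=n$ coincides with the average $\tfrac{1}{2^n}\sum_j\Vert\sum_i\varepsilon_{j_i}v_i\Vert^2=n$ over all sign patterns. You instead use only the two-sign-flip comparison $\Vert v-2v_k-2v_l\Vert^2=n+8\langle v_k,v_l\rangle\le n$ to get $\langle v_k,v_l\rangle\le 0$ for all $k\ne l$, and combine it with $\sum_{k\ne l}\langle v_k,v_l\rangle=\Vert v\Vert^2-n=0$ to force all inner products to vanish. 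Both arguments are correct; yours is slightly more direct (no averaging over all $2^n$ sign choices and no span argument), while the paper's lemma isolates a reusable statement about sign-invariant norms.
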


\begin{proof} Take any set of unit vectors $\{v_i\}_{i=1}^n \subset \mathbb R^n,$ such that $v=\sum_{i=1}^n v_i$ is the {\it longest sum} of them. Let us show that
\[
\prod_{i=1}^n \left\langle v_i,\frac{v}{\Vert v \Vert} \right\rangle \ge \frac{1}{\sqrt{n^n}}.
\]
Write $\langle v_i, v \rangle = a_i \Vert v \Vert \ge 1,$ for some $a_i \in \mathbb R.$ Then $a_i \ge\Vert v \Vert^{-1}$ and, from Cauchy-Schwarz inequality, $a_i \le \Vert v_i \Vert =1.$ Also,
\[
\sum_{i=1}^n a_i=\sum_{i=1}^n \left\langle v_i, \frac{v}{\Vert v \Vert}\right\rangle=\left\langle v, \frac{v}{\Vert v \Vert}\right\rangle= \Vert v \Vert \in \left[\sqrt{n},n\right].
\]
Now, applying Theorem \ref{minimo} for $s=\Vert v \Vert,$ we obtain
\[
\prod_{i=1}^n \left\langle v_i,\frac{v}{\Vert v \Vert} \right\rangle=f(a_1, \ldots, a_n) \ge \mu(\Vert v\Vert) \ge \frac{1}{\sqrt{n^n}}. \qedhere
\]
\end{proof}

\begin{lemma} Let $\{v_i\}_{i=1}^n \subset \mathbb R^n$ be unit vectors such that for any choice of signs $\varepsilon_i,$ we have $\left\Vert \sum_{i=1}^n \varepsilon_i v_i \right\Vert^2=n,$ then $\{v_i\}_{i=1}^n$ is an orthonormal system.
\end{lemma}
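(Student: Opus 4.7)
The plan is to exploit the hypothesis for just three very special sign patterns: the all‑ones pattern, a pattern with a single flipped sign, and a pattern with two flipped signs. Expanding the square of the norm, one has
\[
\Big\|\sum_{i=1}^n \varepsilon_i v_i\Big\|^2 = \sum_{i=1}^n \|v_i\|^2 + 2\sum_{i<j}\varepsilon_i\varepsilon_j \langle v_i, v_j\rangle = n + 2\sum_{i<j}\varepsilon_i\varepsilon_j\langle v_i, v_j\rangle,
\]
so the assumption is equivalent to $\sum_{i<j}\varepsilon_i\varepsilon_j\langle v_i, v_j\rangle = 0$ for every choice of signs.

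First I would set $v = \sum_{i=1}^n v_i$, so that $\|v\|^2 = n$ by the all‑ones case. Next, flipping only the $j$-th sign gives $\|v - 2v_j\|^2 = n$; expanding yields
\[
\|v\|^2 - 4\langle v, v_j\rangle + 4\|v_j\|^2 = n,
\]
so $\langle v, v_j\rangle = 1$ for every $j$ (equivalently, $\sum_{k\neq j}\langle v_j, v_k\rangle = 0$). Finally, flipping both the $i$-th and $j$-th signs gives $\|v - 2v_i - 2v_j\|^2 = n$; expanding,
\[
\|v\|^2 - 4\langle v, v_i\rangle - 4\langle v, v_j\rangle + 4\|v_i\|^2 + 4\|v_j\|^2 + 8\langle v_i, v_j\rangle = n.
\]
Substituting $\|v\|^2 = n$, $\|v_i\|=\|v_j\|=1$ and the identities $\langle v, v_i\rangle = \langle v, v_j\rangle = 1$ from the previous step collapses everything to $8\langle v_i, v_j\rangle = 0$.

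Therefore $\langle v_i, v_j\rangle = 0$ for all $i \neq j$, and since each $v_i$ has unit norm, $\{v_i\}_{i=1}^n$ is an orthonormal system. There is really no hard step here: the only thing to take care of is the bookkeeping in the two expansions, and the order (single‑flip before double‑flip) matters so that the cross terms $\langle v, v_i\rangle$ are already known to equal $1$ when the two‑flip identity is used.
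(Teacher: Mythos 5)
Your proof is correct: the expansion of $\left\Vert v-2v_j\right\Vert^2$ gives $\langle v,v_j\rangle=1$, and then the expansion of $\left\Vert v-2v_i-2v_j\right\Vert^2$ collapses to $8\langle v_i,v_j\rangle=0$ exactly as you claim, so only the three sign patterns (all ones, one flip, two flips) are ever needed. The paper takes a slightly different route: for each fixed $j$ it compares $\bigl\Vert \sum_{i\neq j}\varepsilon_i v_i + v_j\bigr\Vert^2$ with $\bigl\Vert \sum_{i\neq j}\varepsilon_i v_i - v_j\bigr\Vert^2$ to conclude $\bigl\langle \sum_{i\neq j}\varepsilon_i v_i, v_j\bigr\rangle=0$ for \emph{every} sign vector $(\varepsilon_i)_{i\neq j}$, and then invokes the fact that these sums span $\spann\{v_i: i\neq j\}$ to deduce $\langle v_i,v_j\rangle=0$ for all $i\neq j$. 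Your argument replaces that span step with one more explicit expansion, which makes the bookkeeping slightly heavier but keeps everything at the level of direct computation with finitely many sign patterns; the paper's version uses more of the hypothesis (all sign choices) but concludes in one linear-algebra stroke. Both are complete and elementary, and each uses strictly less than the full hypothesis, which is harmless.
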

\begin{proof} Let us call $\mathfrak{I}=\{1,2, \ldots, n\}.$ For $j\in \mathfrak{I},$ let $\mathfrak{I}_j$ be the set $\mathfrak{I}-\{j\}.$
Since
\[
\left\Vert \sum_{i\in \mathfrak{I}_j} \varepsilon_i v_i + v_{j}\right\Vert^2=\left\Vert \sum_{i\in \mathfrak{I}_j} \varepsilon_i v_i - v_{j}\right\Vert^2 \\
\]
it follows that
\[
\left\langle\sum_{i\in \mathfrak{I}_j} \varepsilon_i v_i, v_{j}\right\rangle=0
\]
for any choice of signs $\{\varepsilon_i\}_{i\in \mathfrak{I}_j} \subset \{-1, 1\}^{n-1}.$ Finally, since
\[
\spann \left\{\sum_{i\in \mathfrak{I}_j} \varepsilon_i v_i: \varepsilon_i \in \{-1, 1\} \right\}=\spann \left\{v_i:  i\in \mathfrak{I}_j\right\},
\]
we deduce that $\langle v_{i}, v_j\rangle=0$ for $i \in \mathfrak{I}_j.$ Now, since we can freely choose $j \in \mathfrak{I},$ the lemma is proved.
\end{proof}

\begin{theorem}  For $2 \le n \le 14,$ if $\{v_i\}_{i=1}^n \subset \mathbb R$ are unit vectors such that
\[
\sup\limits_{\Vert x \Vert =1} \vert \langle x, v_1 \rangle \cdots \langle x, v_n\rangle\vert = n^{-n/2},
\]
then $\{v_i\}_{i=1}^n$ is an orthonormal system.
\end{theorem}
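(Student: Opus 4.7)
The plan is to trace backward through the inequality chain in the proof of the previous theorem, identifying the equality cases at each step, and then invoke the preceding lemma to extract orthonormality. I would first normalize the sign convention: after replacing each $v_i$ by $\pm v_i$ to maximize $\bigl\|\sum \varepsilon_i v_i\bigr\|$ (a modification that preserves both $|\langle x,v_1\rangle \cdots \langle x,v_n\rangle|$ and the property of being orthonormal), I would set $v = \sum_{i=1}^n v_i$, so that $v$ is the longest sum and $\sqrt n \le \|v\|\le n$.

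Next, I would apply the inequality chain used to prove $\ccc_n(\mathbb{R}^n)=\sqrt{n^n}$: with $a_i := \langle v_i, v\rangle/\|v\|\in [\|v\|^{-1},1]$ and $\sum a_i = \|v\|$, the hypothesis combined with
\[
n^{-n/2} = \sup_{\|x\|=1}|\langle x,v_1\rangle\cdots\langle x,v_n\rangle| \;\ge\; \prod_{i=1}^n \left\langle v_i, \tfrac{v}{\|v\|}\right\rangle = f(a_1,\ldots,a_n) \;\ge\; \mu(\|v\|) \;\ge\; n^{-n/2}
\]
forces every inequality to be an equality. In particular $\mu(\|v\|)=n^{-n/2}$, and by Theorem \ref{minimo} (the ``moreover'' part) the minimum of $\mu$ is achieved only at $s=\sqrt n$, so $\|v\|=\sqrt n$.

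From $\|v\|^2=n$, I would now exploit that $n$ is simultaneously the maximum and the average of $\bigl\|\sum_{i=1}^n \varepsilon_i v_i\bigr\|^2$ over the $2^n$ sign choices:
\[
\max_{\varepsilon\in\{-1,1\}^n}\left\|\sum_{i=1}^n \varepsilon_i v_i\right\|^2 \;=\; n \;=\; \frac{1}{2^n}\sum_{\varepsilon\in\{-1,1\}^n}\left\|\sum_{i=1}^n \varepsilon_i v_i\right\|^2.
\]
Since a maximum equal to an average forces all values to coincide, every sign choice satisfies $\bigl\|\sum_i \varepsilon_i v_i\bigr\|^2=n$. Applying the preceding lemma then yields that $\{v_i\}_{i=1}^n$ is an orthonormal system (orthonormality is insensitive to the preliminary sign adjustment). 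The only potential obstacle is verifying that equality in the outer chain really propagates down to $\mu(\|v\|)=n^{-n/2}$, but this is immediate from the chain of $\ge$ inequalities being pinched between two copies of $n^{-n/2}$; the substantive work has already been done in Theorem \ref{minimo}, which pins the equality case to the single value $s=\sqrt n$.
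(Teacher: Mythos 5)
Your proposal is correct and follows essentially the same route as the paper: pinch the inequality chain to force $\mu(\Vert v\Vert)=n^{-n/2}$, use the uniqueness of the minimizer $s=\sqrt{n}$ from Theorem \ref{minimo} (equivalently Remark \ref{ocond}) to get $\Vert v\Vert^2=n$, observe that the maximum of $\bigl\Vert\sum_i\varepsilon_iv_i\bigr\Vert^2$ over sign choices then equals its average $n$ so all sign choices give norm $\sqrt{n}$, and conclude by the preceding lemma. You merely make explicit some steps the paper leaves implicit (the sign normalization and the max-equals-average argument), which is fine.
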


\begin{proof} Recall, from Remark \ref{ocond}, that the value $\sqrt{n^n}$ is attained only if $s=\sqrt{n}.$ Now, the {\it longest sum} $v=v_1+\ldots +v_n$ must have norm $\sqrt{n}.$ But then, any vector $v_{(\varepsilon)_j}=\sum_{i=1}^n \varepsilon_{j_i} v_i$ has norm $\sqrt{n}.$ Hence, the assertion follows from the previous lemma.
\end{proof}

\begin{fremark} From \cite{PR} we knew that the {\it longest sum} of a set of vectors was a good candidate to check if inequality (\ref{BST}) holds. In this work we extended from $n=5$ to $n=14$ the validity range of Conjecture \ref{pol14}, by applying the results from Section \ref{uno}. For $n=34,$ M. Matolcsi and G. A. Mu\~{n}oz (see \cite{MaMu}) gave an example where the {\it longest sum} $v$ of some set of unit vectors $\{v_i\}_{i=1}^{34}$ does not satisfy the inequality $\prod \vert \langle v_i,v\rangle\vert ~\ge~34^{-17}.$ However, the inequality holds in some alternative vector. From their example it is possible to construct many others for any $n>34.$

Given $s \in \left[\sqrt{n},n\right],$ if we denote by $\mathcal F(s)$ the set of all $n-$tuples of unit vectors $\{v_i\}_{i=1}^n,$ such that its {\it longest sum} $v$ has $\Vert v \Vert=s,$ then the map
\[
\Lambda: \mathcal F(s) \longrightarrow \Sigma_{s}
\]
defined by
\[
\Lambda(v_1, v_2, \ldots, v_n)=\frac{1}{s}\left(\langle v_1,v\rangle, \ldots, \langle v_n,v\rangle \right)
\]
is not necessarily surjective. Then it is possible that the {\it longest sum} $v$ still works as a good tester for Inequality (\ref{BST}) for some other values of $14<n<34.$
\end{fremark}

\section*{Acknowledgements} The author wishes to express his gratitude to Vicky Venuti for many stimulating conversations during the preparation of this document. Also, the author would like to thank the referees for carefully reading the manuscript, and for their useful comments and remarks which improved this article.

\end{document}